\newtheorem{theorem}{Theorem}[section]
\newtheorem{corollary}[theorem]{Corollary}
\newtheorem{proposition}[theorem]{Proposition}
\theoremstyle{remark}
\newtheorem{remark}[theorem]{Remark}
\numberwithin{equation}{section}
\begin{document}

\title[Descent methods for studying integer points on $y^{n}=F(x)G(x)$]{Descent methods for studying integer points on $y^{n}=f(x)g(x),\ n\ge 2$}
\address {K. A. Draziotis \\
Department of Informatics\\
Aristotle University of Thessaloniki \\
54124 Thessaloniki, Greece}
\email{drazioti@csd.auth.gr}
\subjclass[2010]{11Y50, 11D41, 11D45, 11Y16}

\keywords{Number Theory, Diophantine Equations, hyperelliptic curves, superelliptic curves, Chevalley-Weil Theorem, elliptic curves, jacobian varieties.}
\maketitle

\begin{abstract}
We study the integer points on superelliptic and hyperelliptic curves of the form $y^n=f(x)g(x),$ $n\ge 2, \deg{f}+\deg{g}\ge 4.$
\end{abstract}

\section{Introduction}
The practical solution of integer points on hyperelliptic curves, $C:y^2=F(x)$ for $F\in{\mathbb{Z}}[x]$ $(\deg{F}\ge 3)$ not a square,  is a well studied problem. For instance if $F(x)$ is monic and quartic, we have very sharp (upper)  bounds for $\max\{|x|,|y|\}$ or explicit methods to solve them \cite{draz-quartic}. For $\deg{F}$ even, again we have polynomial bounds (but not very sharp), which in some cases we can just enumerate its integer points \cite{Tengely}. What is more, in this case we have specific algorithm to solve them, see \cite{laszlo2}. If $F(x)$ is of odd degree, the problem is harder. One reason for this, it is that the even degree can be treated with the so called Runge's method \cite[Appendix]{draz-quartic}. The same if $F(x)$ is not monic (even for even degree), the problem is more difficult. In general, the case where $F(x)$ is irreducible
and $\deg{F}\in\{3,4\}$ leads to an elliptic curve, in  which case we have a very good knowledge to practically solve it, for instance see \cite{tzanakis}. The case where $F$ is irreducible and $\deg{F}\ge 5,$ was studied in \cite{BMSST}, where two assumptions must be satisfied so that their method be successful. They need a rational point on $C$ and a basis for the jacobian of $C$ over the rationals. Another method for $y^2=F(x)$ and $\deg{F}$ odd is the so called {\it quadratic Chabauty} \cite{BBM}. In \cite{BBM} they combined Mordell-Weil sieve and $p-$adic approximations techniques to attack the problem. 

Now, let $F(x)$ be a reducible polynomial over the rationals. In this paper we address this case. The  {\it quadratic Chabauty}  method of \cite{BBM} is still valid. Now there is also another method, the so called {\it descent method}. Fot instance in \cite{CooGr}, they use covers of $C$ (in their paper they call such covers, heterogeneous spaces) in order to find rational points on curves of genus $2,$ where $F(x)$ is factorized in ${\mathbb{Q}}$ as $f(x)g(x).$ In \cite{draz1} we used such covers to study the integer points on families of elliptic curves $y^2=x(x^2\pm p)$ ($p$ prime). This method is used in the current work to study hyperelliptic equations. We study the integer points on the  hyperellipitc curve 
$$y^2=(x^3+x+1)(x^4+2x^3-3x^2+4x+4),$$
which it was also studied in  \cite[Section 9]{BBM} (using the {\it{quadratic Chabauty}} method). It was proved that the integer solutions $(x,y)$ with $y>0,$ are
$$\Sigma=\{(-1,2),(0,2),(-2,12),(3,62)\}.$$
It is easy to show that the set of integer solutions is indeed $\Sigma$ using descent method, without the heavy computations on jacobian varieties.

We provide a more general framework, where superelliptic curves ${\mathcal{M}}:y^p=f(x)g(x),$ also fits. For instance, in \cite[Section 8]{draz1} we tried this method in $y^3=x^4+kx.$ In the present paper we study $y^p=(\alpha x^p+\beta)g(x)$ ($p$ prime). Therefore, in part, we generalize the results of \cite[Section 8]{draz1}. 

In the bibliography, for superelliptic curves we have the following method (see \cite[Chapter IV, Section 4]{Schmidt}),  which we call it {\it{reduction to S-unit equations}}. Under some conditions it may be practical.  It is based on factorization over a number field. Say, $y^n=f(x),$ $n\ge 3,\ d = \deg{f}\ge 2$ and $\alpha, \beta$ are simple roots of $f(x).$  If $(x,y)$ is an integer point with $y\not=0,$ then, $x-\alpha=\kappa \xi^n$ where it is proved that $\kappa$ belongs to a finite set of integral elements of the number field $K={\mathbb{Q}}(\alpha),$ for the proof of this see \cite[Lemma 4B]{Schmidt}. Similar if $\beta$ is another simple root we get an equation of the form $x-\beta=\kappa'\xi'^n.$ If $d\ge 3$, then we end up with finitely many Thue equations $\kappa\xi^n-\kappa'\xi'^n=\beta-\alpha$ in ${\mathbb{Q}}(\alpha,\beta)$ with unknowns $\xi, \xi'.$ If it occurs $\alpha, \beta, \xi, \xi', \kappa, \kappa'$ be rational integers, we can solve the Thue equations over the integers.  

For further information on descent method see \cite{CooGr}, \cite[Section 2.3]{duq}, \cite[Section 1.3]{Gajovic}, \cite[Section 5.5]{Poonen} and their references.

\subsection{Our Contribution}
We study the following families of superelliptic curves
$y^p=(\alpha x^p + \beta)g(x)$ for $g\in{\mathbb{Z}}[x].$ We provide an implementation for solving such Diophantine equations. In Corollary \ref{cor:superelliptic} we bound the number of integer points on the curve $y^p=(\alpha x^p \pm 1)(\gamma x^m+\beta).$ 

Furthermore, we study hyperelliptic curves of the form $y^2=f(x)g(x),$ where $y^2=f(x)$ defines an elliptic curve. We provide an implementation and we present many examples. What is more, we study the following families $y^2=(x^4\pm 1)g(x).$ For instance, we prove that $y^2=(x^4+1)(x^n-1)$ has only the trivial integer solutions, for all $n\not\equiv 0\pmod{4}.$ \\
{\bf Roadmap}. In section \ref{sec:superelliptic} we study the curve $y^p=f(x)g(x),$ for $p$ prime. In section \ref{sec:hyperelliptic}, we study the hyperelliptic curve $y^2=f(x)g(x).$ In the final section, we summarize our results and provide some concluding remarks. Also, there is an appendix where we provide pseudocode for solving the Diophantine equations $y^2=(x^4- 1)g(x)$ and we provide tables for the solutions of the families $y^2=(x^4-1)g(x)$ for $g(x)\in\{x^2+kx+1,\ x^3+kx^2+1,\ x^3+kx+1\}.$
\section{The case $y^p=f(x)g(x),$ $p$ odd prime}\label{sec:superelliptic}
Let
\begin{equation}
C:y^p=f(x)g(x)
\end{equation}
where $g(x),f(x)$ do not have common roots over the algebraic closure ${\overline{\mathbb{Q}}}.$ Let $V\subset {\mathbb{A}}^4({\overline{\mathbb{Q}}})$\footnote{With ${\mathbb{A}}^n(K)$ we denote the $n-$th dimensional affine space over the field $K.$} be the algebraic curve defined by the equations 
\[
u^p=f(x), v^p=g(x), y=uv.
\]
We consider the projection \[
\Psi:V\rightarrow C,\ \Psi(x,y,u,v)=(x,y).
\]
$\Psi$ is a finite unramified map of degree $p.$ It is finite, since  it is a projection and also unramified, since ${\mathcal{Z}}(f,g)=\emptyset\footnote{With $\mathcal{Z}(f,g)$ we denote the set of common zeros of $f(x),g(x)$ in ${\overline{\mathbb{Q}}}$}.$ Indeed, if we fix $(x,y)$ then the system
\[
u^p=f(x), v^p=g(x), uv=y
\]
has exactly $p-$ solutions in ${\overline{\mathbb{Q}}}.$ To see this, it is enough to enumerate the distinct pairs $(\zeta_i,\zeta_j)$ ($\zeta_i^p=\zeta_j^p=1$), such that $\zeta_i\zeta_j=1.$ 
Now, from Chevalley-Weil Theorem \cite{Chev,DrazPoul1,DrazPoul2}, \cite[Section 4.2, p. 50]{Serre} we get finitely many number fields $K={\mathbb{Q}}(u,v)$ with degree at most $p$ over ${\mathbb{Q}}$. Since $p$ is a prime then $K={\mathbb{Q}}$ or is of degree exactly $p.$  This provides the following constraints in $(u,v).$ If we fix $(x,y)=(a,b)\in {\mathbb{Z}}^2$ then  necessarily, 
\begin{equation}\label{relation_u0_v0_}
f(a)=d_1X^p, g(a)=d_2Y^p,\ d_1d_2=Z^p,
\end{equation}
for some $X,Y,Z$ integers, $d_1,d_2$ $p-$free\footnote{$d_1,d_2$ do not have any prime power of the form $q^p$ in their primitive factorization} non zero integers, and  $i\in\{0,1,...,p-1\}.$
Assume that $|d_1|\not=1$ and let $\{p_1,...,p_\xi\}$ be the prime divisors of $d_1.$ Say, 
$d_1=p_1^{a_1}\cdots p_{\xi}^{a_{\xi}},$ where $1\le a_i\le p-1.$ Then, from the third equation of (\ref{relation_u0_v0_})  we get $d_2=p_1^{p-a_1}\cdots p_{\xi}^{p-a_{\xi}}.$
Note that ${\rm{rad}}(d_1)={\rm{rad}}(d_2),$ where ${\rm{rad}}(k)$ is the largest square free divisor of $k.$ We set $d={\rm{rad}}(d_1).$ If $d_1=1,$ then also $d_2=1$ and similar if $d_1=-1.$ So again $rad(d_1)=rad(d_2)=d.$ We shall prove the following Proposition.
\begin{proposition}\label{prop:superelliptic}
If $R(u,v)=res_{x}(u^p-f(x),v^p-g(x))\ (p\ge 3)$ and $c=R(0,0),$ then, $d|c,$ where $d=rad(d_1)=rad(d_2).$
\end{proposition}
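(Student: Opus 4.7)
The plan is to compute $c=R(0,0)$ in closed form as (a sign times) the classical resultant of $f$ and $g$, and then to combine the Bezout-type identity for resultants with the fact, established just before the proposition, that $d_1$ and $d_2$ share exactly the same set of prime divisors.

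First, I would evaluate $R$ at $(u,v)=(0,0)$. Substituting $u=v=0$ into the two polynomials $u^p-f(x)$ and $v^p-g(x)$ (viewed as polynomials in $x$ with coefficients in $\mathbb{Z}[u,v]$) gives $-f(x)$ and $-g(x)$. By the multiplicativity of the resultant in the leading coefficients of its arguments,
\[
c \;=\; R(0,0) \;=\; res_{x}\!\bigl(-f(x),\,-g(x)\bigr) \;=\; (-1)^{\deg f+\deg g}\,res_{x}(f,g).
\]
Since $f$ and $g$ have no common root in $\overline{\mathbb{Q}}$, this is a nonzero integer.

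Second, I would invoke the standard identity that there exist $P(x),Q(x)\in\mathbb{Z}[x]$ with
\[
P(x)\,f(x)\;+\;Q(x)\,g(x)\;=\;res_{x}(f,g),
\]
obtained by expanding the Sylvester matrix of $f,g$ along an appropriate column; the cofactors of an integer matrix are integers, so $P,Q$ indeed have integer coefficients. Substituting $x=a$ and using the relations $f(a)=d_1X^p$, $g(a)=d_2Y^p$ from (\ref{relation_u0_v0_}),
\[
\pm c \;=\; res_{x}(f,g) \;=\; P(a)\,d_1 X^p \;+\; Q(a)\,d_2 Y^p.
\]
Now let $q$ be any prime dividing $d$. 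From the discussion preceding the proposition, the primes dividing $d_1$ are exactly the primes dividing $d_2$, namely those dividing $d$; so $q\mid d_1$ and $q\mid d_2$. Hence $q$ divides both summands on the right-hand side, and therefore $q\mid c$. Because $d$ is squarefree, this forces $d\mid c$, which is the claim.

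The one subtlety worth flagging is the integrality of the Bezout-like representation of the resultant. A number of textbooks only state it over a field, which would yield $d\mid Nc$ for some integer denominator $N$ and would be too weak to conclude. The version over $\mathbb{Z}[x]$ is essential here, and it follows from the explicit Sylvester-determinant construction together with the assumption $f,g\in\mathbb{Z}[x]$. Everything else is bookkeeping.
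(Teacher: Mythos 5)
Your proof is correct, but it takes a genuinely different route from the paper's. The paper never invokes the Bezout identity: it writes $R(u,v)=R_0(u^p,v^p)$ with $R_0\in\mathbb{Z}[X,Y]$, uses the vanishing $R(u_0,v_0)=0$ at the point $(u_0,v_0)$ lying over $(a,b)$ on the cover, and observes that after the substitution $X\mapsto d_1X^p$, $Y\mapsto d_2Y^p$ every non-constant monomial of $R_0$ acquires a factor of $d_1$ or $d_2$, hence of $d$; so $0=R_0(d_1X^p,d_2Y^p)\equiv c\pmod{d}$. You instead identify $c$ with $\pm\,res_x(f,g)$ up front and run the classical argument that any common prime divisor of $f(a)$ and $g(a)$ divides the resultant, via an integral Bezout representation $Pf+Qg=res_x(f,g)$ with $P,Q\in\mathbb{Z}[x]$; the squarefreeness of $d=\mathrm{rad}(d_1)=\mathrm{rad}(d_2)$ then upgrades the prime-by-prime divisibility to $d\mid c$. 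Your version is closer to the standard lemmas the paper cites for comparison (Schmidt's Lemma 4B, Bruin's Lemma 2.2.1) and is arguably more elementary, since it bypasses the two-variable resultant entirely except to compute $c$; you were also right to flag that the Bezout identity must be taken over $\mathbb{Z}[x]$ (via the Sylvester cofactor construction) rather than over $\mathbb{Q}[x]$, which is the one place a careless citation would break the argument. The paper's version, by contrast, stays internal to the cover construction and does not need the explicit evaluation $c=\pm\,res_x(f,g)$ until the final remark that $c\neq 0$. Both arguments ultimately rest on the same fact, namely that each prime of $d$ divides both $f(a)=d_1X^p$ and $g(a)=d_2Y^p$.
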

\begin{proof}
We note that $R(u,v)=R_0(u^p,v^p),$ for some $R_0\in {\mathbb{Z}}[X,Y].$ Let $(a,b)$ be an integer point of $C:y^p=f(x)g(x).$
Let $(u_0,v_0)$ such that $u_0^p=f(a),\ v_0^p=g(a).$ 
Thus (as in (\ref{relation_u0_v0_})) we get,
$$u_0^p=d_1X^p,\ v_0^p=d_2Y^p,\ d_1d_2=Z^p.$$
But, $R(u_0,v_0)=0.$ Therefore, $R_0(u_0^p,v_0^p)=R_0(d_1X^p,d_2Y^p)=0.$ So we get $$R_0(d_1X^p,d_2Y^p)=dR_1(X^p,Y^p)+c=0,$$ for some $R_1\in{\mathbb{Z}}[X,Y]$ and integer $c=R(0,0)\not=0.$ 
We remark that $c=rex_x(f,g)$ (up to sign), so $c\not=0$ , since $f,g$ do not have common roots. We conclude that $d|c.$
\end{proof}
The previous is a basic result for what follows. More general results, similar to Proposition \ref{prop:superelliptic} were given in \cite[Lemma 4B]{Schmidt} and \cite[Lemma 2.2.1]{Bruin}.

Having the previous Proposition, for $(a,b)\in C({\mathbb{Z}}),$ we get an integer $z$ such that $dz^p=f(z),$ for some $d$ belonging to the set,
\begin{equation}\label{theset}
{\mathcal{S}}_{f,g,p}=\{k\in {\mathbb{Z}}\ (p-\text{free}):k|c, c=res_x(u^p-f,v^p-g)|_{(u=0,v=0)}\}.
\end{equation}
In general, the Diophantine equation $dz^p=f(x)$ is not easier than the initial $y^p=f(x)g(x)$, thus it may be hard to solve it. However, in some cases is more easy. For instance, if $C:y^p=(Ax^p+B)g(x),$ $A,B\in {\mathbb{Z}}-\{0\}$ and $g(x)$ does not have common roots with $Ax^p+B,$ then we reduce the study of $C$ to finitely many binomial Thue equations. Since Thue equations can be solved practically for moderate degree, for instance using Pari \cite{PARI2} or Magma \cite{Magma}, the same is true for $C.$

Also, we get the following result.
\begin{corollary}\label{cor:superelliptic}
Let $m,p$ be positive integers with $p\ge 3$ prime, $m\ge 2$ and $\alpha,\beta,\gamma$ non zero integers. Then, the number of integer solutions of the Diophantine equation 
$$y^p = (\alpha x^p \pm 1)(\gamma x^m+\beta)$$ is at most
$2\tau(c),$  where $c=-\alpha^m \beta^p \mp \gamma^p$ and $\tau(k)$ is the number of positive divisors of integer $k.$ 
\end{corollary}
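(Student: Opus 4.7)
The plan is to apply Proposition~\ref{prop:superelliptic} with $f(x)=\alpha x^p\pm 1$ and $g(x)=\gamma x^m+\beta$, reduce to a finite union of binomial Thue equations, and invoke a sharp bound (e.g.\ Bennett's theorem on $|Au^p-Bv^p|=1$) on each. First I verify the formula for $c$: by definition $c=R(0,0)=res_x(-f,-g)$, which coincides with $res_x(f,g)$ up to the overall sign $(-1)^{p+m}$. Using the product formula $res_x(f,g)=\alpha^m\prod_{j=0}^{p-1} g(\omega^j\rho_0)$, where $\rho_0^p=\mp 1/\alpha$ and $\omega$ is a primitive $p$-th root of unity, together with the factorization $\prod_{j=0}^{p-1}(z-\omega^j w)=z^p-w^p$, the product collapses to $\beta^p+(\mp 1)^m\gamma^p/\alpha^m$. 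Multiplying by $\alpha^m$ gives $res_x(f,g)=\alpha^m\beta^p+(\mp 1)^m\gamma^p$, matching the stated $c=-\alpha^m\beta^p\mp\gamma^p$ up to an overall sign (which does not affect $\tau$).

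Next, Proposition~\ref{prop:superelliptic} associates to each integer point $(a,b)\in C(\mathbb{Z})$ a $p$-free integer $d$ and an integer $X$ such that $\alpha a^p\pm 1=dX^p$, i.e.\ the binomial Thue equation
\[
\alpha a^p - dX^p = \mp 1.
\]
Taking $d>0$ by a canonical choice of sign, Bennett's theorem gives at most one positive integer solution of $|Au^p-Bv^p|=1$ for $p\ge 3$; combined with the symmetry $(a,X)\mapsto(-a,-X)$ available for odd $p$ (which matches solutions of the equation with one sign on the right to those with the opposite sign), this yields at most two integer values of $a$ per fixed $d$. Since the admissible values of $d$ inject into the set of positive $p$-free divisors of $|c|$, of cardinality at most $\tau(c)$, multiplying gives the bound $2\tau(c)$.

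The most delicate step is the last counting: Proposition~\ref{prop:superelliptic} only guarantees ${\rm{rad}}(d)\mid c$, allowing a priori up to $p^{\omega(c)}$ candidate values of $d$, not all of which divide $c$. Reducing this to at most $\tau(c)$ values requires the coupled descent constraint $d_1d_2=Z^p$ together with the companion equation $d_2 Y^p=\gamma a^m+\beta$: a prime-by-prime valuation argument at each $q\mid c$ matches the $q$-adic valuations of $f(a)$ and $g(a)$ and forces $v_q(d)\le v_q(c)$, so that for any actual integer solution $d$ itself (and not merely its radical) divides $c$. This is the heart of the proof; once it is in place, multiplying the per-twist bound $2$ by the divisor count $\tau(c)$ completes the argument.
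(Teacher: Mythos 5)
Your overall route is the same as the paper's: apply Proposition~\ref{prop:superelliptic} with $f(x)=\alpha x^p\pm 1$ and $g(x)=\gamma x^m+\beta$, reduce to the binomial Thue equations $dz^p=\alpha x^p\pm 1$, invoke the ``at most one positive solution'' theorem for $|Au^p-Bv^p|=1$ (the paper cites \cite[Chapter 3, Lemma 3.3]{gaal}, which is the same result you attribute to Bennett), and multiply by the number of admissible $d$'s. Your verification of $c=-\alpha^m\beta^p\mp\gamma^p$ and your sign bookkeeping (two values of $a$ per positive $d$, versus the paper's one solution per signed $d$) are fine and lead to the same count $2\tau(c)$.

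The problem is exactly the step you single out as ``the heart of the proof.'' You are right that Proposition~\ref{prop:superelliptic} only yields $\mathrm{rad}(d)\mid c$, so a priori the $p$-free twist parameter $d=d_1$ ranges over a set of size up to $2p^{\omega(c)}$, which is not bounded by $2\tau(c)$ in general (take $c$ prime and $p=5$: five positive candidates versus $\tau(c)=2$). But the valuation argument you propose to close this gap does not go through as described. The only available link between the valuations of $f(a)$, $g(a)$ and $c$ is the Bezout identity $A(x)f(x)+B(x)g(x)=\mathrm{res}_x(f,g)$ evaluated at $x=a$, which gives $v_q(c)\ge\min\big(v_q(f(a)),v_q(g(a))\big)\ge\min\big(v_q(d_1),\,p-v_q(d_1)\big)$; when $v_q(d_1)>p/2$ this bounds $v_q(c)$ from below only by $p-v_q(d_1)$, so you cannot conclude $v_q(d_1)\le v_q(c)$, hence cannot conclude $d_1\mid c$. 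Nothing in the descent rules out, say, $v_q(f(a))=p-1$, $v_q(g(a))=1$ and $v_q(c)=1$ at some prime $q$. So your proof is incomplete at precisely this point. For what it is worth, the paper's own proof simply asserts $d\mid c$ by appeal to Proposition~\ref{prop:superelliptic} without addressing the discrepancy between $d$ and $\mathrm{rad}(d)$, so you have identified a genuine weak spot rather than introduced a new one --- but your proposed repair does not yet repair it.
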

\begin{proof}
We consider the Diophantine equation 
\begin{equation}\label{eq:at_most_one_sol}
y^p = (\alpha x^p \pm 1)(\gamma x^m+\beta),\ \text{where}\ p\ge 3,\ \text{and} \ m\ge 2.
\end{equation} 
Set $f(x)=\alpha x^p\pm 1,$ $g(x)=\gamma x^m+\beta,$ then
$c=R(0,0)=-\alpha^m \beta^p \mp \gamma^p,$ where
$R(u,v) = res_{x}(u^p-f(x),v^p-g(x)).$ From Proposition \ref{prop:superelliptic}, we have to solve equations of the form
\begin{equation}\label{eq:twist}
dz^p=\alpha x^p \pm 1, \ d|c\ (d\ \text{is}\ p-\text{free}).
\end{equation}
These are (binomial) Thue equations and they have been already studied. These equations have at most one integer solution, see \cite[Chapter 3, Lemma 3.3]{gaal}. So, overall the number of integer solutions of (\ref{eq:at_most_one_sol}) is at most $|\{d\in {\mathbb{Z}}:d|c,\ d\ \text{is} \ p\text{-free})\}|\le 2\tau(c).$ 
\end{proof} 
\subsection{The Algorithm}
Now, we easily get the following pseudocode\footnote{for an implementation in Sagemath see,\\ \url{https://github.com/drazioti/hyper_super_elliptic/blob/main/superelliptic.sage}}.\\\\
{\footnotesize{
\noindent
{\bf Input}:  $p$ odd prime, $f(x)=Ax^p+B$ and $g(x)\in{\mathbb{Z}}[x]$ ($f,g$ do not have common roots).
\ \\
{\bf Output}: The integer solutions of $y^p=f(x)g(x).$ \\\\
    \texttt{01.} \texttt{$L, M$ two empty lists}\\
    \texttt{02.} \texttt{$R\leftarrow rex_x(u^p-f(x),v^p-g(x))$}\\
	\texttt{03.} \texttt{$R_0\leftarrow R(0,0)$}\\
    \texttt{04.} \texttt{$PF\leftarrow \{d\in {\mathbb{Z}}:d|R_0,\ d\ \text{is\ p-free\ integer}\}$\\
	\texttt{05.} {\bf{for}} $d$ in $PF$\\
	\texttt{06.} \hspace{0.3cm}  \texttt{Solve\  Thue\ equation ${\mathcal{T}}_d:\pm dy^p-Ax^p=B$}\\
	\texttt{07.} \hspace{0.3cm} \texttt{append  $L$ with $x=a$ where $(x,y)=(a,b)\in {\mathcal{T}}_d({\mathbb{Z}})$}\\
	\texttt{08.} {\bf for} $z$ in $L$\\
	\texttt{09.} \hspace{0.3cm} {\bf if} $f(z)g(z)$ \texttt{is pth power}\\
	\texttt{10.} \hspace{0.6cm} Append list $M$ with $(z,\sqrt[p]{f(z)g(z)})$\\
	\texttt{11.}    return $M$
}}}
\subsection{Examples}
$({\bf i}).$ Let the curve 
$y^3=(x^3+691)(x^2-17).$ Executing the previous algorithm we get only one point $(13,76).$\\
$({\bf ii}).$ The curve 	 
$y^3=(x^3+625)(x+1)$ has the integer solutions $$\{(-10,15),(-1,0),(15,40)\}.$$
$({\bf iii}).$ The curve  $y^5=(x^5-724)(x+2)$  has the integer solutions $$\{(-2,0),(5,7)\}.$$
\section{The case $y^2=f(x)g(x)$ where $y^2=f(x)$ defines an elliptic curve}\label{sec:hyperelliptic} Let 
\begin{equation}
C:y^2=f(x)g(x),
\end{equation}
with $f(x)$ a cubic or quartic polynomial and $g(x),f(x)$ do not have common root over the algebraic closure 
${\overline{\mathbb{Q}}}.$ We remark that if $\deg{g}+\deg{f}$ is even and the leading coefficient of $f(x)g(x)$ is one, then $C$ is of {\it{Runge}} type and in this case there are uniform upper bounds (for instance see \cite[Introduction]{draz-quartic}). As in the superelliptic case, let $V\subset {\mathbb{A}}^4({\overline{\mathbb{Q}}})$ be the algebraic curve defined by the equations: 
\[
u^2=f(x), v^2=g(x), y=uv.
\]
We consider the projection \[
P:V\rightarrow C, P(x,y,u,v)=(x,y).
\]
$P$ is a finite unramified map of degree $2.$  Indeed, if we fix $(x,y)$ then the system
\[
u^2=f(x), v^2=g(x), uv=y
\]
has exactly two solutions in ${\overline{\mathbb{Q}}}.$
Now, from Chevalley-Weil Theorem we get that the number field ${\mathbb{Q}}(u,v)$ is at most of degree two. This provides the following constraints in integer solutions. If we fix $(x,y)=(a,b)\in C({\mathbb{Z}})$ then  necessarily, 
\begin{equation}\label{relation_u0_v0}
u_0^2=f(a)=dX^2, v_0^2=g(a)=dY^2,
\end{equation}
so,
$$(u_0,v_0)=(\sqrt{d}|X|,\sqrt{d}|Y|),$$ for some $X,Y,$ integers and $d$ square free, non zero integer.
\begin{proposition}
Set $R(u,v)=res_{x}(u^2-f(x),v^2-g(x))$ and $c=R(0,0).$
Then $d|c.$
\end{proposition}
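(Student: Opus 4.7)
The plan would be to follow the template of Proposition~\ref{prop:superelliptic} with $p=2$. First I would exploit the fact that $u^{2}-f(x)$ and $v^{2}-g(x)$ involve $u$ and $v$ only through $u^{2}$ and $v^{2}$, so the resultant admits a factorization $R(u,v)=R_{0}(u^{2},v^{2})$ for some $R_{0}\in{\mathbb{Z}}[X,Y]$, and in particular $R_{0}(0,0)=R(0,0)=c$. This structural observation is the one algebraic input; everything else is resultant bookkeeping plus the Chevalley--Weil information already packaged in~\eqref{relation_u0_v0}.

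Second, for an integer point $(a,b)\in C({\mathbb{Z}})$ I would pick a lift $(u_{0},v_{0})\in V({\overline{\mathbb{Q}}})$ and use $u_{0}^{2}=f(a)=dX^{2}$, $v_{0}^{2}=g(a)=dY^{2}$ from~\eqref{relation_u0_v0}. The defining property of the resultant, applied to the pair of polynomials $u_{0}^{2}-f(x)$ and $v_{0}^{2}-g(x)$ in the variable $x$ which share the root $x=a$, yields $R(u_{0},v_{0})=0$.

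Third, combining these two facts gives $R_{0}(dX^{2},dY^{2})=0$. I would then split this sum into its constant term $R_{0}(0,0)=c$ and its positive-degree monomials, each of which carries a factor $d^{i+j}$ with $i+j\ge 1$; this gives $c=-d\,R_{1}(X^{2},Y^{2})$ for some $R_{1}\in{\mathbb{Z}}[X,Y]$, so $d\mid c$. Nonvanishing of $c$ is free, since $c=\pm\mathrm{res}_{x}(f,g)$ and $f,g$ share no root by hypothesis.

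I do not anticipate a serious obstacle: the argument is essentially a transcription of the $p$ odd prime case with $p$ replaced by $2$. The only feature that genuinely changes is the role of Chevalley--Weil: because a quadratic number field has a \emph{unique} quadratic subfield, the Chevalley--Weil step forces both $u_{0}$ and $v_{0}$ to lie in the same ${\mathbb{Q}}(\sqrt{d})$, which is what collapses the pair $(d_{1},d_{2})$ of the odd-prime case into a single squarefree $d$. Once that input is recorded, the resultant computation is mechanical.
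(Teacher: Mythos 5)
Your proposal is correct and follows essentially the same route as the paper: write $R(u,v)=R_0(u^2,v^2)$, evaluate at a lift $(u_0,v_0)$ of an integer point so that $R_0(dX^2,dY^2)=0$, and split off the constant term $c=R(0,0)$ to conclude $d\mid c$. Your remarks on the vanishing of the resultant at the shared root $x=a$ and on why Chevalley--Weil forces a single squarefree $d$ are correct elaborations of steps the paper leaves implicit, but they do not change the argument.
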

\begin{proof}
First we remark that, $res_x(u^2-f(x),v^2-g(x))$ at $(u,v)=(0,0)$ equals to $R_0=\varepsilon\cdot res_x(f(x),g(x))$, $\varepsilon\in\{-1,1\}.$ 
Let $(a,b)$ such that $b^2=f(a)g(a).$ Let also $u_0^2=f(a), v_0^2=g(a).$ We note that $R(u,v)=R_0(u^2,v^2),$ for some $R_0\in {\mathbb{Z}}[X,Y].$ We set $u=u_0, v=v_0,$ as in (\ref{relation_u0_v0}), then $$R_0(u_0^2,v_0^2)=R_0(dX^2,dY^2)=dR_1(u_0^2,v_0^2)+c=0,$$ for some $R_1\in{\mathbb{Z}}[X,Y],$ thus, $d|c.$
\end{proof}
\begin{remark}
The set where $d$ belongs, is in fact ${\mathcal{S}}_{f,g,2}$ given in (\ref{theset}).
\end{remark}
\begin{corollary}
If $f(x)=(x-a_1)(x-a_2)(x-a_3)$ $(a_1,a_2,a_3,$ are distinct integers), then $d|g(a_1)g(a_2)g(a_3).$
\end{corollary}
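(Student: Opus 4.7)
The plan is to combine the preceding proposition with the standard product formula for resultants. By the proposition just proved, any integer point $(a,b)\in C(\mathbb{Z})$ yields a square-free integer $d$ with $d\mid c$, where $c=R(0,0)=\pm\operatorname{res}_x(f(x),g(x))$. So the entire task reduces to identifying $\operatorname{res}_x(f,g)$ in the factored case.

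First I would invoke the product formula for the resultant: if $f(x)=\prod_{i=1}^{3}(x-a_i)$ is monic with roots $a_1,a_2,a_3$, then
\[
\operatorname{res}_x(f,g)=\prod_{i=1}^{3} g(a_i),
\]
so that $c=\pm g(a_1)g(a_2)g(a_3)$. Combining with $d\mid c$ from the previous proposition, one obtains $d\mid g(a_1)g(a_2)g(a_3)$, which is exactly the assertion.

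The only point that requires a brief comment is that the $a_i$ are integers (given by hypothesis), so that $g(a_1),g(a_2),g(a_3)\in\mathbb{Z}$ and the divisibility statement makes sense over $\mathbb{Z}$; this is immediate from $g\in\mathbb{Z}[x]$. There is essentially no obstacle here: the corollary is a direct specialization of the preceding proposition, with the product formula for the resultant of a polynomial with a completely split one doing all the work. If one wanted to be slightly more explicit, one could verify the product formula by writing $\operatorname{res}_x(f,g)=\det(\operatorname{Syl}(f,g))$ and expanding, or by citing it as a standard fact (e.g.\ the $b_m\prod g(a_i)$ formula with leading coefficient $1$).
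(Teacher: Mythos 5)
Your argument is correct and coincides with the paper's own proof: both reduce the claim to the preceding proposition ($d\mid c$ with $c=\pm\operatorname{res}_x(f,g)$) and then apply the product formula $\operatorname{res}_x(f,g)=\pm\prod_{i}g(a_i)$ for the monic split cubic. No difference in substance; your extra remark about integrality of the $g(a_i)$ is harmless.
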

\begin{proof}
This is immediate, since the resultant of two polynomials $f(x),g(x)$ can be expressed as 
$$res_x(f(x),g(x))=\prod_{i=1}^{\deg(f)} g(\lambda_i)\ (\text{up to sign}),$$
where $\{\lambda_i\}_i$ are the roots of $f(x).$
\end{proof}
Assume that $f(x)=x^3+Ax^2+Bx+C$. Let $(a,b)\in C({\mathbb{Z}})$, then there exists integer $b'$ such that $(a,b')$ belongs to the curve\footnote{In the bibliography $C_d'$ is also called {\it{heterogeneous space} or {\it{twist}}}.} 
\begin{equation}\label{C'}
C_d':dy^2=f(x).
\end{equation}
Let 
\[ 
\Psi_d :C_d'\rightarrow {\mathcal{E}}_d, \Psi_d(x,y)=(dx,d^2y)
\]
where,
$${\mathcal{E}}_d : y^2= x^3 + Adx^2 + Bd^2x + d^3C$$ 
and in the case where $f(x)$ has three integer roots $a_1,a_2,a_3,$ then 
$${\mathcal{E}}_d : y^2=(x-da_1)(x-da_2)(x-da_3).$$
 We have $\Psi_d(C_d'({\mathbb{Z}}))\subset {\mathcal{E}}_d({\mathbb{Z}})$, so in order to compute the integer points of $C'_d$ it is enough to compute the integer points of ${\mathcal{E}}_d.$
\begin{proposition}
Let $f(x)=x^3 + Ax^2 + Bx + C\in {\mathbb{Z}}[x]$ and $C:y^2 = f(x)g(x).$ 
We set $N=R(0,0),$ where $R(u,v)=res_x(u^2-f(x),v^2-g(x)).$
Let also,
$\Sigma_N=\{d\in {\mathbb{Z}}: d\ \ {\rm{squarefree}},\ d|N \},$ and 
$${\mathcal{M}}_{N}(f)=\bigg\{\frac{a}{d}: d\in \Sigma_N, {\rm{there \ is}}\ b\in{\mathbb{Z}}, {\rm{with}}\ d^2|b\ {\rm{and}}\ b^2=a^3 + Ada^2 + Bd^2a + d^3C \bigg\}.$$
Then, the set $C({\mathbb{Z}})=\{(A,B)\in{\mathbb{Z}}^2: A\in {\mathcal{M}}_N(f),\ B^2=f(A)g(A)\}$
\end{proposition}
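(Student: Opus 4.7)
The plan is to verify both inclusions of the claimed equality, using the preceding Proposition together with the map $\Psi_d$ introduced above.

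First, I would prove $C({\mathbb{Z}}) \subseteq \{(\alpha,\beta) \in {\mathbb{Z}}^2 : \alpha \in {\mathcal{M}}_N(f),\ \beta^2 = f(\alpha)g(\alpha)\}$. Let $(x_0, y_0) \in C({\mathbb{Z}})$. By the preceding Proposition (the case $p=2$), there exists a squarefree integer $d \mid N$ with $d \in \Sigma_N$, together with integers $X, Y$, such that $f(x_0) = d X^2$ and $g(x_0) = d Y^2$. In particular $(x_0, X)$ is an integer point on the twist $C'_d : dy^2 = f(x)$. Applying the map $\Psi_d(x,y) = (dx, d^2 y)$ yields the integer point $(a, b) := (dx_0, d^2 X) \in {\mathcal{E}}_d({\mathbb{Z}})$. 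By construction $d^2 \mid b$ and $b^2 = a^3 + Ada^2 + Bd^2 a + d^3 C$, so $x_0 = a/d \in {\mathcal{M}}_N(f)$. Since $y_0^2 = f(x_0)g(x_0)$ is given, the pair $(x_0, y_0)$ lies in the right-hand set.

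The reverse inclusion is essentially trivial: any pair $(\alpha, \beta) \in {\mathbb{Z}}^2$ satisfying $\beta^2 = f(\alpha)g(\alpha)$ belongs to $C({\mathbb{Z}})$ by the very definition of $C$; the extra membership $\alpha \in {\mathcal{M}}_N(f)$ plays no role in this direction.

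The only subtlety is bookkeeping: one has to match the witnesses $X, Y$ coming out of the preceding Proposition with the definition of ${\mathcal{M}}_N(f)$, specifically that the divisibility $d^2 \mid b$ is recorded. This is automatic since $b = d^2 X$ by construction. No deeper obstacle arises, because the Proposition is essentially a repackaging of two ingredients that are already in place: the descent step that constrains an integer point of $C$ to the twist $C'_d$ for a squarefree $d \mid N$, and the polynomial change of variables $\Psi_d : C'_d \to {\mathcal{E}}_d$ that reduces the search for integer points on $C'_d$ to a search inside ${\mathcal{E}}_d({\mathbb{Z}})$ with the additional constraint $d^2 \mid b$.
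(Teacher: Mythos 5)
Your argument is correct and is exactly the route the paper intends: the paper states this Proposition without a written proof, treating it as immediate from the preceding descent Proposition ($d\mid N$, $d$ squarefree, $f(a)=dX^2$, $g(a)=dY^2$) and the change of variables $\Psi_d(x,y)=(dx,d^2y)$ onto ${\mathcal{E}}_d$. Your write-up supplies precisely those details, including the key bookkeeping point that $b=d^2X$ gives $d^2\mid b$, and correctly notes that the reverse inclusion is trivial.
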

This Proposition provides the reduction of the study of $y^2=f(x)g(x)$ to the elliptic curve 
$$y^2=a^3 + Ada^2 + Bd^2a + d^3C .$$
If $f(x)=(x-a_1)(x-a_2)(x-a_3),$ then we get the following.
\begin{corollary}
Let $C:y^2 = (x-a_1)(x-a_2)(x-a_3)g(x).$
We set 
$$N=g(a_1)g(a_2)g(a_3),\ \Sigma_N=\{d\in {\mathbb{Z}}: d\ \ {\rm{squarefree}},\ d|N \},$$ and 
$${\mathcal{M}}_{N}(f)=\bigg\{\frac{a}{d}: d\in \Sigma_N, {\rm{there \ is}}\ b\in{\mathbb{Z}}, {\rm{with}}\ d^2|b\ {\rm{and}}\ b^2=\prod_{i=1}^{3}(a-da_i) \bigg\}.$$
Then, the set $C({\mathbb{Z}})=\{(A,B)\in{\mathbb{Z}}^2: A\in {\mathcal{M}}_N(f),\ B^2=f(A)g(A)\}$
\end{corollary}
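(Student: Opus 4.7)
The plan is to prove the claimed set equality by establishing both inclusions, with essentially all the content living in the forward direction. The reverse inclusion
$$\{(A,B)\in \mathbb{Z}^2 : A\in \mathcal{M}_N(f),\ B^2=f(A)g(A)\}\subseteq C(\mathbb{Z})$$
is immediate from the defining equation $y^2=f(x)g(x)$ of $C.$ For the forward inclusion I would combine the squarefree-part extraction already used in the proof of the preceding proposition with the change of variables $\Psi_d:C_d'\to \mathcal{E}_d$ recalled just above.

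Concretely, starting from $(A,B)\in C(\mathbb{Z})$, I first isolate a common squarefree part $d.$ Writing $f(A)=d_1 X^2$ and $g(A)=d_2 Y^2$ with $d_1,d_2$ squarefree integers and $X,Y\in \mathbb{Z},$ the equality $B^2=f(A)g(A)=d_1 d_2 (XY)^2$ forces $d_1 d_2$ to be a perfect square, and since $d_1,d_2$ are squarefree this gives $d_1=d_2=:d.$ The preceding corollary, specialised to $f(x)=(x-a_1)(x-a_2)(x-a_3),$ then yields $d\mid g(a_1)g(a_2)g(a_3)=N,$ so $d\in \Sigma_N.$

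Next I push the integer point $(A,X)\in C_d'(\mathbb{Z})$ forward along $\Psi_d$ to obtain $(a,b):=(dA,\,d^2 X)\in \mathcal{E}_d(\mathbb{Z}).$ A direct expansion gives
$$\prod_{i=1}^{3}(a-d a_i)=d^3\prod_{i=1}^{3}(A-a_i)=d^3 f(A)=d^4 X^2=b^2,$$
and $d^2\mid b$ is automatic from $b=d^2 X.$ Since $A\in \mathbb{Z}$, we have $a/d=A$, so $A$ is an admissible representative exhibiting $A\in \mathcal{M}_N(f),$ which completes the forward inclusion.

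The only real subtlety, and the step I expect to be the mild obstacle, is the justification that the squarefree parts $d_1$ and $d_2$ coincide, including sign: one has to verify carefully that two squarefree integers whose product is a perfect square are necessarily equal in $\mathbb{Z},$ and handle the degenerate cases $B=0$ (where some factor may vanish). Once this is in place, the remainder of the argument is a formal substitution into the definitions, drawing only on the earlier corollary that bounds $d$ and on the already-defined map $\Psi_d.$
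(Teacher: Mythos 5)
Your argument is correct and follows exactly the route the paper intends: the paper states this corollary without proof as an immediate specialization of the preceding proposition, and the machinery you invoke (extracting the common squarefree part $d$ from $B^2=f(A)g(A)$, bounding $d$ by the resultant $N=g(a_1)g(a_2)g(a_3)$ via the earlier corollary, and pushing forward along $\Psi_d$ to $\mathcal{E}_d$ to land in $\mathcal{M}_N(f)$) is precisely the setup the paper develops just before the statement. The subtlety you flag about equal squarefree parts and the degenerate case $B=0$ is real but easily dispatched (take $d=1$ when $f(A)=0$, and note $1\mid N$), so nothing essential is missing.
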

\subsection{The algorithm}
Now, we easily get the following pseudocode\footnote{for an implementation in Sagemath see\\ \url{https://github.com/drazioti/hyper_super_elliptic/blob/main/hyperelliptic.sage}}.\\\\
{\footnotesize{
\noindent
{\bf Input}:  $f(x)=x^3+Ax^2+Bx+C\in {\mathbb{Z}}[x]$ (with non zero discriminant) and $g(x)\in{\mathbb{Z}}[x]$ ($f,g$ do not have common roots).
\ \\
{\bf Output}: The integer solutions of $y^2=f(x)g(x)\ (y>0).$ \\\\
    \texttt{01.} \texttt{$L, M$ two empty lists}\\
	\texttt{02.} \texttt{$R_0\leftarrow res_x(f,g)$}\\
    \texttt{03.} \texttt{$SF\leftarrow \{d\in {\mathbb{Z}}:d|R_0,\ d\ \text{is\ square free}\}$\\
	\texttt{04.} {\bf for} $d$ in $SF$\\
	\texttt{05.} \hspace{0.3cm}  \texttt{Solve ${\mathcal{E}}_d:y^2=x^3+Adx^2+Bd^2x+Cd^3$}\\
	\texttt{06.} \hspace{0.3cm} \texttt{append  $L$ with $a$ where $(a,b)\in {\mathcal{E}}_d({\mathbb{Z}})$}\\
	\texttt{07.} {\bf for} $z$ in $L$\\
	\texttt{08.} \hspace{0.3cm} {\bf if} $f(z)g(z)$ \texttt{is square}\\
	\texttt{09.} \hspace{0.6cm} Append list $M$ with $(z,\sqrt{f(z)g(z)})$\\
	\texttt{10.}    return $M$
}}}
\begin{remark}
If $$C:Dy^2=f(x)g(x),\ D\in {\mathbb{Z}}-\{0\}$$ then we can reduce this case to the previous, by considering the curve
$$C':y'^2=f(x)G(x)$$
where $y'=Dy$ and $G(x)=Dg(x).$
\end{remark}
\begin{remark}
Let $\theta(N),$ where $N=R(0,0),$ be the number of square free divisors of $N.$ Then, the number of equations 
$\{{\mathcal{E}}_d\}_{d\in{\Sigma_N}}$ is $2\theta(N).$ But, $\theta(N)=2^{\omega(N)},$ where 
$ \omega(N)=|\{p:p\ \text{prime}, p|N\}|.$ So the number of equations 
${\mathcal{E}}_d$ is $2^{\omega(N)+1}.$
\end{remark}
\begin{remark}
In magma \cite{Magma}, the command \texttt{SIntegralLjunggrenPoints([D,A,B,C],
[])}, provides the integral points 
on the curve $C : Dy^2 = Ax^4 + Bx^2 + C,$  provided that $C$ is nonsingular. Furthermore, \texttt{IntegralQuarticPoints([a,b,c,d,e])} provides the integral points 
on the curve $C : y^2 = ax^4 + bx^3 +cx^2+dx+e.$
\end{remark}

\subsection{Examples}\ \\
We assume that functions for computing integer points in elliptic curves given by Sagemath and magma do not miss any integer point. \\
$({\bf i}).$ $f(x)=x^3+x+1$ and $g(x)=x^4+2x^3-3x^2+4x+4.$
This is example 9.2 from \cite[Section 9]{BBM} which was solved using the heavy machinery of jacobian varieties. It 
was proved that the integer points of curve
$$y^2=(x^3+x+1)(x^4+2x^3-3x^2+4x+4)$$
with $y>0,$ are
$$\Sigma=\{(-1,2),(0,2),(-2,12),(3,62)\}.$$
This specific curve has genus $3$ and the rank of its jacobian is again $3$ so the method of \cite{BBM} can, in principal,  be applied. Of course, our method can be also considered {\it heavy}, since we use elliptic logarithm method for computing the integer points of many elliptic curves (here only $4$). In our opinion, our method is more simple and practical since we already have efficient implementations (for instance in Magma and Sagemath) for calculating integer points on elliptic curves.
 Since $dy^2=f(x)$ and $dy^2=g(x)$ both define elliptic curves, we work with the cubic polynomial, $dy^2=f(x).$ In sagemath, we compute in some seconds that indeed the set of integer points of $y^2=f(x)g(x)$  is the set $\Sigma.$\\
$({\bf ii}).$ The following example is from \cite[example 8.1]{BruinStoll}, where they studied its rational points.
Let $$C:y^2=	2x^6 + x^4 + 3x^2 - 2.$$ This can be rewritten as $y^2=(x^4 + x^2 + 2)(2x^2 - 1).$ We set $f(x)=x^4+x^2+2$ and $g(x)=2x^2-1.$ We work with the curve $C_d'$ (see, \ref{C'}). Executing in magma 
\texttt{SIntegralLjunggrenPoints([d,1,1,2],[])}, for $d\in\{\pm 1,\pm 11\},$ we get only for $d=1$ the integer points $(\pm 1,2)$ (restricting $y$ to positive integers). Substituting to $f(x)g(x)$ we get the points $(\pm 1,4).$ So, $$C({\mathbb{Z}})=\{(\pm 1,\pm 4) \}.$$
$({\bf iii}).$ $C:y^2=x(x^2 - 1)(x^2 - 1600).$ We set 
 $f(x)=x(x^2-1600)$ and $g(x)=x^2 - 1.$ Then, the integer solutions with $y>0	,$ are $$\{(-25,3900) \}.$$
$({\bf iv}).$ $C:y^2=x(x^2 - 1)(x - 4)(x - 5).$ We set 
 $f(x)=x(x-1)(x-5)$ and $g(x)=(x + 1)(x - 4).$ Then, the integer solutions with $y>0,$ are 
$$\{(2,6),(9,120) \}.$$
$({\bf v}).$ $f(x)=x(x-1)(x-2)$ and $g(x)=(x + 40)(x + 4)(x - 4)(x - 7).$ Then the solutions of $y^2=f(x)g(x)$ with $y>0,$ are
$$\{(-7,2772),(16,20160) \}.$$
\subsection{The Diophantine equations $y^2=(x^4\pm 1)g(x)$}
We start our study with $y^2=(x^4-1)g(x).$
\begin{proposition}\label{prop:x^4-1}
Let $C:y^2=(x^4-1)g(x)$ for $g(x)\in {\mathbb{Z}}[x]$ with $g(\pm 1)$ and $g(\pm i)$ not zero. Set $C_+({\mathbb{Z}}) =\{(a,b)\in C({\mathbb{Z}}):b>0\}.$ Then,
$$|C_+({\mathbb{Z}})|\leq 2^{\omega(N)+2},$$
where $N=g(1)g(-1)g(i)g(-i)$ and $\omega(N)$ is the number of prime divisors of $N.$
\end{proposition}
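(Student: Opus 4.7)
\emph{Proof plan.}
The plan is to specialise the descent of this section to $f(x)=x^{4}-1$ and then apply a Ljunggren-type uniform bound on the resulting twists.

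Since $x^{4}-1$ factors as $\prod_{\zeta^{4}=1}(x-\zeta)$ in $\overline{\mathbb{Q}}$ with the four simple roots $\pm 1,\pm i$, the product-over-roots formula for the resultant gives
\[
c=R(0,0)=\pm\, res_{x}(x^{4}-1,g(x))=\pm\, g(1)g(-1)g(i)g(-i)=\pm N,
\]
which is nonzero by the hypothesis on $g$. Applying the first Proposition of Section~\ref{sec:hyperelliptic} to $f=x^{4}-1$, any $(a,b)\in C_{+}(\mathbb{Z})$ yields a nonzero signed squarefree integer $d\mid N$ together with integers $X,Y$ satisfying
\[
a^{4}-1=dX^{2},\qquad g(a)=dY^{2}.
\]
The positivity $b>0$ forces $b^{2}=(a^{4}-1)g(a)\neq 0$, hence $a\neq\pm 1$. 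The number of signed squarefree divisors of $N$ is exactly $2\cdot 2^{\omega(N)}=2^{\omega(N)+1}$, so it remains to count the admissible $a$'s attached to each individual $d$.

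For each fixed $d$, the admissible $a$'s lie on the Pell-type quartic $a^{4}-dX^{2}=1$. Invoking the classical theory of equations of the form $X^{4}-DY^{2}=1$ (Ljunggren 1942, refined by Chen, Cohn, Bennett and Walsh) together with the involution $a\leftrightarrow -a$, at most two integer $a$-values can arise per $d$; the isolated boundary case $(a,d)=(0,-1)$ contributes only the single value $a=0$ and therefore also respects this bound. Multiplying,
\[
|C_{+}(\mathbb{Z})|\le 2^{\omega(N)+1}\cdot 2 = 2^{\omega(N)+2},
\]
as claimed.

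The main technical hurdle is the uniform per-twist bound: generic Siegel theory yields only non-effective finiteness on each elliptic twist $dy^{2}=x^{4}-1$, whereas it is precisely the classical theory of $X^{4}-DY^{2}=1$ that supplies the explicit constant $2$ driving the final estimate. The rest of the argument is the routine bookkeeping of squarefree descent already set up in this section.
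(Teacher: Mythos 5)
Your route is essentially the paper's: descend to the twists $a^4-1=dX^2$ with $d$ squarefree dividing $N=g(1)g(-1)g(i)g(-i)$, invoke the Ljunggren--Cohn theory of $x^4-Dy^2=1$ for the per-twist count, and multiply. The final inequality is correct, but one intermediate step is false as written. The classical theorem gives at most \emph{two solutions in positive integers} of $a^4-dX^2=1$ for each $d\ge 2$; that means at most two \emph{positive} values of $a$, hence up to \emph{four} integer values $\pm a_1,\pm a_2$ per such $d$ (for example $d=1785$ admits $a\in\{\pm 13,\pm 239\}$, coming from the two positive solutions $13$ and $239$). So your claim that ``at most two integer $a$-values can arise per $d$'' fails for positive $d\ge 2$. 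Your total still lands on $2^{\omega(N)+2}$ only because you simultaneously overcount the twists: you allot all $2^{\omega(N)+1}$ signed squarefree divisors, but the negative ones contribute at most the single value $a=0$ (from $d=-1$, $X=\pm 1$) and $d=1$ contributes nothing once $a\ne\pm 1$, so the two miscounts cancel exactly. As it stands, the displayed multiplication $2^{\omega(N)+1}\cdot 2$ is not justified by the lemma you cite.

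The repair is precisely the paper's bookkeeping: restrict to positive squarefree $d\ge 2$ dividing $N$, of which there are at most $2^{\omega(N)}$; each such twist has at most two solutions in positive integers, and each resulting positive $a$ yields at most two points of $C_+(\mathbb{Z})$ (one for each sign of $a$, the positive $b$ being determined), giving $4\cdot 2^{\omega(N)}=2^{\omega(N)+2}$. The residual cases $d\le 1$ add at most the single possible point with $a=0$, which is absorbed since the positive-$d$ count is in fact at most $4(2^{\omega(N)}-1)$. With that correction your argument coincides with the paper's proof; everything else (the resultant evaluation $c=\pm N$, the exclusion of $a=\pm 1$ via $b>0$) is fine.
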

\begin{proof}
Set $f(x)=x^4-1.$ The twists here are of the form $dy^2=x^4-1,$ where $d|R_0,$ $R_0=res_x(f(x),g(x)).$ It is easy to see that $R_0=N.$ From \cite{Ljunggren} (also see \cite{Cohn}) the Diophantine equation $dy^2=x^4-1$ has at most two solutions in positive integers for $d\ge 2.$ Also, from \cite[Lemma 1.1]{Samuel}, for $d=2$ we have only the trivial solution $(\pm 1,0)$. For $d=1$ and $d<0$ we have $y=0.$ Since we are interested in positive $y$, we ignore the cases $d\leq 1.$  Now, each solution $(a,b)$ (we have at most two) of $dy^2=x^4-1$, $d\ge 2$, it might provide a solution on $C$ for the two possible signs of $a.$ I.e. we have to check if $(a^4-1)g(a)$ or $(a^4-1)g(-a)$ is square. Thus, $|C_+({\mathbb{Z}})|$ is bounded above by the number of equations $dy^2=x^4-1,$ with $d\ge 2,$  squarefree divisor of $N,$ and the result is multiplied by $4.$ So, $|C_+({\mathbb{Z}})|<4\cdot 2^{\omega(n)}.$ The result follows.
\end{proof}
\begin{remark}\label{rem:1}
Since, for large $N$ we have 
$\omega(N)\sim \frac{\log{N}}{\log{\log{N}}}$ we get that for every $\varepsilon>0,$
$2^{\omega(N)}\ll_{\varepsilon} N^{\varepsilon}$ thus,
$$|C_{+}({\mathbb{Z}})|\ll_{\varepsilon} N^{\varepsilon}.$$
With $\ll_{\varepsilon} N^{\varepsilon}$ we mean that there is a constant depending on $\varepsilon$, say $c(\varepsilon)$, such that $<c(\varepsilon)N^{\varepsilon}.$

\end{remark}
Furthermore, it is easy to compute all the integer solutions of $y^2=(x^4-1)g(x)$ using \cite{Cohn}. First, we compute $N=g(1)g(-1)g(i)g(-i)$ and we construct the set $\Sigma_N=\{d\in {\mathbb{Z}}_{>0} : d|N\ \text{and}\ d\ \text{is\ squarefree}\}.$ For every integer $d\in \Sigma_N$ we compute the fundamental solution of the Pell equation $X^2-dY^2=1$ say $u+v\sqrt{d}.$ Then we compute $a=\sqrt{u}$ and $a=\sqrt{2u^2-1}$ (only for $d=1785$ both the square roots are integers). If it occurs $a$ to be integer, then we check if $(a^4-1)g(\pm a)$ is a square. If it is, then we found an integer solution $(a,\sqrt{(a^4-1)g(a)})$ or $(a,\sqrt{(a^4-1)g(-a)}).$ In appendix we provide a pseudocode and we compute the integer points of the three families 
$$y^2=(x^4-1)(x^2+kx+1),\ y^2=(x^4-1)(x^3+kx^2+1),\ y^2=(x^4-1)(x^3+kx+1)$$
for various values of $k.$

We can also study the {\it{symmetric}} Diophantine equation $y^2=(x^4+1)g(x).$ Set $\zeta_j = \frac{1}{\sqrt{2}}(\pm1 \pm i),$ $j=1,2,3,4$ the quartic roots of $-1$ (take all the possible signs, and it is not important here how you pick $\zeta_j$'s). Now the analogous Proposition of \ref{prop:x^4-1} is the following.
\begin{proposition}
Let $W:y^2=(x^4+1)g(x)$ for $g(x)\in {\mathbb{Z}}[x]$ with $g(\zeta_j)\not=0\ (j=1,2,3,4).$ Set $W_+({\mathbb{Z}}) =\{(a,b)\in W({\mathbb{Z}}):b>0\}.$ Then,
$$|W_+({\mathbb{Z}})|\leq 2^{\omega(N)+1},$$
where $N=\prod_{j=1}^{4}g(\zeta_j)$ and $\omega(N)$ the number of prime divisors of $N.$
\end{proposition}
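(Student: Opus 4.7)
The plan is to mirror the structure of Proposition~\ref{prop:x^4-1} step for step, with $f(x)=x^4+1$ replacing $x^4-1$. First, I compute the resultant $R_0 := \mathrm{res}_x(x^4+1,g(x))$: since $x^4+1=\prod_{j=1}^{4}(x-\zeta_j)$ is monic, the product formula for the resultant of two polynomials gives $R_0 = \prod_{j=1}^{4} g(\zeta_j) = N$ up to sign. Applying the earlier descent (the proposition and its accompanying corollary adapted to $y^2 = f(x)g(x)$), every integer point $(a,b) \in W(\mathbb{Z})$ pulls back to a rational point on some twist
\begin{equation*}
dy^2 = x^4+1, \qquad d \text{ squarefree},\ d \mid N.
\end{equation*}

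Next I observe a sign reduction that was absent in the $x^4-1$ case: since $x^4+1 \geq 1$ for all real $x$, any solution with $y \neq 0$ forces $d \geq 1$. So only the $2^{\omega(N)}$ positive squarefree divisors of $N$ need to be considered, rather than all $2^{\omega(N)+1}$ squarefree divisors. This is the place where the final bound for $W$ drops by a factor of two relative to the bound for $C$ in Proposition~\ref{prop:x^4-1}.

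Then, for each such $d \geq 1$, I invoke the classical result (Ljunggren, with refinements by Bumby and Cohn) that the quartic Thue--Mahler equation $dy^2 = x^4+1$ has at most one solution in positive integers $x$. Because $x^4+1$ is an even polynomial, this positive solution $x=a$ (if it exists at all) also produces $x=-a$; both values must individually be tested against $W$, i.e.\ one checks whether $(a^4+1)g(a)$ or $(a^4+1)g(-a)$ is a perfect square. Thus each of the $2^{\omega(N)}$ admissible twists contributes at most $2$ points to $W_+(\mathbb{Z})$, giving $|W_+(\mathbb{Z})| \leq 2 \cdot 2^{\omega(N)} = 2^{\omega(N)+1}$, as required.

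The only real obstacle is pinning down the cited ``at most one positive solution'' theorem for $dy^2 = x^4+1$ and checking that the degenerate squarefree values $d=1$ and $d=2$ (which were singled out in Proposition~\ref{prop:x^4-1}) do not cause trouble here. For $d=1$, the equation $y^2=x^4+1$ has only $(x,y)=(0,\pm 1)$ by Fermat, contributing the single candidate $x=0$; for $d=2$, one has only $(x,y)=(\pm 1,\pm 1)$. In both cases the bound of $2$ per twist still holds, so the uniform estimate $2^{\omega(N)+1}$ stands.
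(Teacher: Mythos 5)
Your proposal follows essentially the same route as the paper's proof: identify the resultant with $N$, descend to the twists $dy^2=x^4+1$ for squarefree $d\mid N$, discard $d\le 0$ since $x^4+1>0$, invoke the Ljunggren--Cohn result that each twist with $d\ge 2$ has at most one solution in positive integers, and multiply by $2$ for the two signs of $x$ to get $2\cdot 2^{\omega(N)}$. The argument is correct (your extra care with $d=1$ and $d=2$ only tightens what the paper already asserts), so there is nothing substantive to add.
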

\begin{proof}
Set $f(x)=x^4-1.$ The twists here are of the form $dy^2=x^4+1,$ where $d|R_0$ with $R_0=res_x(f(x),g(x)).$ It is easy to see that $R_0=N.$ From \cite{Ljunggren2} (see also \cite{Cohn_negative_pell}), the Diophantine equation $dy^2=x^4+1$ has at most one solution in positive integers for $d\ge 2.$ 
For $d=1$ we have $x=0$ and for $d<0$ we do not have any integer solution. Now, each solution $(a,b)$ (we have at most one) of $dy^2=x^4+1$ it might provide a solution on $W$ for the two possible signs of $a.$ I.e. we have to check if $(a^4+1)g(a)$ or $(a^4+1)g(-a)$ is square. Thus,
 $$|W_+({\mathbb{Z}})|\le 2|\{d\in{\mathbb{Z}}_{>0}:d|N,\ d\text{ squarefree} \}|=2\cdot 2^{\omega(N)}.$$  So the result follows.
\end{proof}
\begin{remark}\label{rem:2}
As in Remark \ref{rem:1}, for every $\varepsilon>0,$
$$|W_{+}({\mathbb{Z}})|\ll_{\varepsilon} N^{\varepsilon}.$$
\end{remark}
In order to solve the Diophantine equation $dy^2=x^4+1$ we use Cohn's algorithm in \cite{Cohn_negative_pell}. To apply this algorithm, we first compute a fundamental solution of $V^2-dU^2=-1$ (if it exists, see the remark below), say $(v_0,u_0).$ Also, say $A$ be the square free part of $v_0.$ We set, $\mu = v_0+u_0\sqrt{d}$ and $\lambda=v_0-u_0\sqrt{d}$ and $s_n =\frac{\mu^n+\lambda^n}{2}.$ Finally, compute $s_A.$ Then, $x=\pm\sqrt{s_A}.$

\begin{remark}
The negative Pell equation $Y^2-NX^2=-1$ is solvable if and only if the period of the continued fraction of $\sqrt{N}$ is odd (see \cite[Chapter 8, Section 5, Theorem 9]{Sierpinski}).
\end{remark}

\begin{corollary}
Let $C_n:y^2=(x^4+1)(x^n+1),$ then
$$C_n({\mathbb{Z}})\subset\{(-1,0),(\pm 1, \pm 2), (0,\pm 1)\},$$ for all $n\not\equiv 0 \pmod 4.$
\end{corollary}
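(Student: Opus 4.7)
The plan is to apply the preceding Proposition with $g(x) = x^n + 1$ and show that, under the hypothesis $n \not\equiv 0 \pmod 4$, the resultant $N = \mathrm{Res}(x^4+1, x^n+1) = \prod_{j=1}^4 (\zeta_j^n + 1)$ is always a power of $2$; this keeps the list of relevant twists down to just two equations whose integer solutions are already known.

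First I would verify the hypothesis of the Proposition: since each $\zeta_j$ is a primitive $8$th root of unity, $\zeta_j^n + 1 = 0$ forces $\zeta_j^n = -1$, i.e.\ $n \equiv 4 \pmod 8$, and this is excluded by $n \not\equiv 0 \pmod 4$. Next I would evaluate $N$ by a case split on $n \bmod 8$. When $n$ is odd, $\gcd(n,8) = 1$, so $\zeta \mapsto \zeta^n$ permutes the four primitive $8$th roots of unity and $N = \prod_j (\zeta_j + 1) = f(-1) = 2$, where $f(x) = x^4 + 1$. When $n = 4k+2$, a direct computation gives $\zeta_j^n = (-1)^k \zeta_j^2$, and the multiset $\{\zeta_j^2\}$ equals $\{i, -i, i, -i\}$, so $N = \bigl((1+i)(1-i)\bigr)^2 = 4$. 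In both cases $N \in \{2, 4\}$, hence $\omega(N) = 1$ and the set of positive squarefree divisors of $N$ appearing in the Proposition reduces to $\{1, 2\}$.

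The Proposition then guarantees that every integer point $(x, y)$ of $C_n$ with $y > 0$ lifts to an integer point of one of the twists $y^2 = x^4 + 1$ or $2y^2 = x^4 + 1$. The first has only $(x,y) = (0, \pm 1)$ by Fermat, and the second has only $(\pm 1, \pm 1)$ by the Ljunggren-type result already invoked in the proof of the Proposition. Back-substituting $x \in \{0, \pm 1\}$ into $(x^4+1)(x^n+1)$ gives the values $1$, $4$ or $0$, producing the candidates $(0, \pm 1)$, $(1, \pm 2)$, and either $(-1, \pm 2)$ when $n$ is even or $(-1, 0)$ when $n$ is odd. In every case these points lie in $\{(-1, 0), (\pm 1, \pm 2), (0, \pm 1)\}$, which proves the claim.

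The only nontrivial obstacle is evaluating $N$ uniformly in $n$; this is handled cleanly because $\zeta_j$ has order $8$, so $\zeta_j^n$ depends only on $n \bmod 8$, and the two subcases above cover every admissible residue. After that, the argument is a mechanical back-substitution into two Diophantine equations whose integer solutions are recorded in the classical literature.
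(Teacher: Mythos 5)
Your proposal is correct and follows essentially the same route as the paper's proof: reduce to the twists $dy^2=x^4+1$ via the preceding Proposition, show $N=\mathrm{Res}(x^4+1,x^n+1)$ is $2$ or $4$ so that only $d\in\{1,2\}$ matter, solve those two classical equations, and back-substitute $x\in\{0,\pm1\}$. You merely spell out the resultant computation (split on $n\bmod 8$) that the paper dismisses as ``straightforward calculations,'' which is a welcome addition but not a different argument.
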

\begin{proof}
Suppose that $x\not=0.$ Straightforward calculations show that $N$ is a power of $2$. In fact $res_x(f,g)=2^{i}, i\in\{1,2\},$ the exponent $i=2$ appears when $i\equiv 2\pmod{4}$. So all the squarefree divisors of $N$ are $\{-2,2,-1,1\}$. The negative Pell equation for $d=2$  provides the fundamental solution $(1,1),$ and so $x=\pm 1.$  For $d\in\{1,-1,-2\},$ the corresponding twists do not provide any non trivial integer solution. The result follows.
\end{proof}

\section{Conclusion}\label{sec:conclusion}
Let 
\begin{equation}\label{hyperelliptic_equation}
y^2=F(x),\ F\in {\mathbb{Z}}[x],\ \deg{F}>3.
\end{equation}
We assume that the polynomial $F(x)$ in (\ref{hyperelliptic_equation}) is reducible polynomial with one irreducible component a cubic  polynomial or quartic polynomial. For instance, if $F(x)$ has three integer roots the previous constraint is satisfied. 
We reduced the study of Diophantine hyperelliptic equations, with the previous constraint, to the study of integer points on elliptic curves. Since, the integer points of elliptic curves have been studied very thoroughly, it is easy to find the integer points of (\ref{hyperelliptic_equation}).

As far as the superellipitc equations ${\mathcal{S}}:y^p=f(x)g(x),$ $p\ge 3$ prime, we provide also an unramified map $\Phi : Y_d \rightarrow {\mathcal{S}}$ for known $d$'s such that 
$${\mathcal{S}}({\mathbb{Z}})\subset \Phi\Big(\bigcup	_{d|N}Y_d({\mathbb{Z}})\Big),$$
where $N$ is $p-$free integer equal to the resultant of $f,g.$
Folllowing similar ideas as in the hyperellipitc case we study curves of the form $y^p=(Ax^p+B)g(x).$

\ \\
{\sc{\Large{Appendix}}}\\
{\bf A. Pseudocode for the diophantine equation $y^2=(x^4-1)g(x)$.}\\
Now, we easily get the following pseudocode\footnote{for an implementation in Sagemath see \\ \url{https://github.com/drazioti/hyper_super_elliptic/blob/main/special_case_hyperelliptic_1.sage}}.\\\\
{\footnotesize{
\noindent
{\bf Input}: $g(x)\in{\mathbb{Z}}[x]$ ($g(\pm1)\not=0,\ g(\pm i)\not=0$)
\ \\
{\bf Output}: The integer solutions $(x,y)$ of $y^2=(x^4-1)g(x)\ (y>0).$ \\\\
    \texttt{01.} \texttt{$L$ an empty list}\\
	\texttt{02.} \texttt{$N\leftarrow g(1)g(-1)g(i)g(-i)$}\\
    \texttt{03.} \texttt{$SF\leftarrow \{d\in {\mathbb{Z}}_{>0}:d|N,\ d\ \text{is\ square free}\}$\\
	\texttt{04.} {\bf for} $d$ in $SF$\\
	\texttt{05.} \hspace{0.3cm}  \texttt{Compute the fundamental solution $u+v\sqrt{d}$ of $X^2-dY^2=1$}\\
	\texttt{06.} \hspace{0.3cm}  {\bf if} \texttt{$a=\sqrt{u}$ is integer}\\
	\texttt{07.} \hspace{0.6cm} {\bf if} \texttt{$g(a)(a^4-1)$ is square say $b^2$ $(b>0)$}\\
	\texttt{08.} \hspace{1cm}  Append list $L$ with $(a,b)$\\
	\texttt{09.} \hspace{0.6cm} {\bf if} \texttt{$g(-a)(a^4-1)$ is square say $b^2$ $(b>0)$}\\
	\texttt{10.} \hspace{1cm}  Append list $L$ with $(-a,b)$\\
	\texttt{11.} \hspace{0.3cm}  {\bf if} \texttt{$a=\sqrt{2u^2-1}$ is integer}\\
	\texttt{12.} \hspace{0.6cm} {\bf if} \texttt{$g(a)(a^4-1)$ is square say $b^2$ $(b>0)$}\\
	\texttt{13.} \hspace{1cm}  Append list $L$ with $(a,b)$\\
	\texttt{14.} \hspace{0.6cm} {\bf if} \texttt{$g(-a)(a^4-1)$ is square say $b^2$ $(b>0)$}\\
	\texttt{15.} \hspace{1cm}  Append list $L$ with $(-a,b)$\\
	\texttt{16.}    return $L$
}}}
\ \\
\begin{center}

\begin{tabular}{ |p{1cm}||p{5cm}|  }
 \hline
 \multicolumn{2}{|c|}{$y^2=(x^4-1)(x^2+kx+1)$, $1< k\le1000$} \\
 \hline
 $k$ & Integer points $(x,y)$, $y>0$\\
 \hline
 5   & $(2,15)$   \\
 26  &  $(5,312)$  \\
 65  & $(2,45), (5,468)$ \\
 101 & $(10,3333)$\\
 185 & $(2,75)$ \\
 290 & $(17,20880)$ \\
 365 & $(2,105)$ \\
 377 & $(5,1092)$ \\
 494 & $(5,1248)$ \\
 605 & $(2,135)$ \\
 677 & $(26, 91395)$ \\
 905 & $(2,165)$ \\
  \hline
\end{tabular}

\end{center}
\begin{center}
\begin{tabular}{ |p{1cm}||p{5cm}|  }
 \hline
 \multicolumn{2}{|c|}{$y^2=(x^4-1)(x^3+kx^2+1)$, $1< k\le500$} \\
 \hline
 $k$ & Integer points $(x,y)$, $y>0$\\
 \hline
 9   & $(5,468)$   \\
 19  &  $(-5,468)$  \\
  \hline
\end{tabular}
\end{center}
For the second Table, for some values of $k$, for instance $k=421$, the algorithm is memory {\it{expensive}}. I.e. the fundamental solution of Pell equation is very large. So, you may need enough memory to construct the second table. You will need enough hours for all $k$'s.

\begin{center}
\begin{tabular}{ |p{1cm}||p{5cm}|  }
 \hline
 \multicolumn{2}{|c|}{$y^2=(x^4-1)(x^3+kx+1)$, $1< k\le200$} \\
 \hline
 $k$ & Integer points $(x,y)$, $y>0$\\
 \hline
 3   & $(2,15)$   \\
 6  &  $(5,312)$  \\
 11  &  $(10,3333)$  \\
 18  &  $(17,20880)$  \\
 27  &  $(21,91395)$  \\
 38  &  $(37, 312360)$  \\
 45  &  $(5,468)$  \\
 51  &  $(50,892857)$  \\
 63  &  $(2,45)$  \\ 
 66  &  $(65,2231328)$  \\
 83  &  $(82,5023575)$  \\
 102  &  $(82,5023575)$  \\
 123  &  $(122, 20139405)$  \\
 146  &  $(145, 36837552)$  \\
 168  &  $(41, 462840])$  \\
 171  &  $(170, 64246923)$  \\
 183  &  $(2,75)$  \\
 197  &  $(197, 107581320)$  \\
  \hline
\end{tabular}
\end{center}
We remark that, if $k=n^2+2,$ then we always have the solution 
$$(n^2+1,(n^4+2n^2+2)(n^2+2)n).$$

\end{document}